 \newtheorem{thm}{Theorem}[section]
 \newtheorem{cor}[thm]{Corollary}
 \newtheorem{lem}[thm]{Lemma}
 \newtheorem{rem}[thm]{Remark}
 \numberwithin{equation}{section}
\newtheorem{lem*}{Lemma}
\newtheorem{cor*}{Corollary}
\def\R{\Bbb{R}}
\def\C{\Bbb{C}}
\begin{document}
\title{
Remarks on ``Spiral Minimal Products"}
\author{Haizhong Li}
\address{Department of Mathematical Sciences, Tsinghua University, Beijing 100084, P. R. China}
\email{lihz@mail.tsinghua.edu.cn}
\author{Yongsheng Zhang$^*$}
\address{Academy for Multidisciplinary Studies, Capital Normal University, Beijing 100048, P. R. China}
\email{yongsheng.chang@gmail.com}
\date{\today}

\begin{abstract}
This note aims to give a better understanding and some remarks about recent preprint ``Spiral Minimal Products".
In particular, 1. it should be pointed out that a generalized Delaunay construction among minimal Lagrangians of complex projective spaces has been set up.
This is a general structural result working for immersion and current situations.
2. uncountably many new regular (or irregular) special Lagrangian cones with finite density and ``regular" (or irregular) special Lagrangian cones with infinite density in complex Euclidean spaces can be found.
\end{abstract}
\maketitle
\section{Review}\label{P} 

       In \cite{LZ}, we consider spiral product by an immersed curve $\gamma=(\gamma_1,\gamma_2)$ in the unit Euclidean sphere $\mathbb S^3\subset \mathbb C\oplus \mathbb C$. 
        Given embedded $M_1^{k_1}\subset \mathbb S^{2n_1+1}\subset \mathbb C^{n_1+1}$ and $M_2^{k_2}\subset \mathbb S^{2n_2+1}\subset \mathbb C^{n_2+1}$, 
          their spiral product $G_\gamma$ for $\gamma$
          is
            \begin{eqnarray}
 \ \ \ \ \ \  \ \ \  G_\gamma: \mathbb R\times M_1\times M_2 \longrightarrow \mathbb S^{n_1+n_2+1}\ \ by \ \ 
                               \big(t,x,y\big)\longmapsto \Big(\gamma_1(t)f_1(x),\, \gamma_2(t)f_2(y)\Big) .
                               \label{expression}
   \end{eqnarray}

   For simpler computations, we focus on the situation that
   both inputs $M_1$ and $M_2$ are $C$-totally real,
   \footnote{We shall use $C$-totally real, $C_1$ and $C_2$ to replace $\mathscr C$-totally real, $C$ and $\tilde C$ in \cite{LZ}.}\label{ft1}
   namely, $Jx\perp T_xM_1$ and $Jy\perp T_yM_2$ for $\forall x\in M_1, y\in M_2$.
   Here $J$ means the standard complex structure of $\mathbb C^{n_1+1}$ and $\mathbb C^{n_2+1}$ respectively. 
   
   When both inputs are $C$-totally real minimal submanifolds, we want to derive spiral minimal products of them by some $\gamma$.
   With respect to preferred tangential and normal orthogonal bases, 
    the question of minimal surface PDE system transforms to solving a pair of ODEs
     
\begin{equation}\label{1}
 %
                               0=     -2 s'_1 s'_2 b^2 \Big(\dfrac{a}{b}\Big)'
                                    -ab{s'_2}^2 \Big(\frac{s'_1}{s'_2}\Big)'
\end{equation}
and  \ \  $0=\big[(a')^2+(b')^2+\Theta\big]\big(-k_1\frac{b}{a}+k_2\frac{a}{b}\big)
+
      %
                                       \Bigg\{
                                          \Big[
                                          a''-a(s'_1)^2
                                          \Big]
                                          b
                                                                                 -
                                                       \Big[
                                                       b''-b(s'_2)^2
                                                       \Big]
                                                       a
                                                                                 \bigg\}
$
\begin{eqnarray}
      &&          \ \     \ \ \ \ \    \ \ \ \ \ \ \ \     \ \ \ \    \ \ \ \ \ \ \ \ \,\,\,\,\,\,\,        \ \ \  \ \ \ \ \ 
        -
                                       \frac{\mathcal V}{\Theta}
                                           \bigg\{
                                              \Big(2a's'_1+as''_1\Big)as'_1+\Big(2b's'_2+bs''_2\Big)bs'_2
                                          \bigg \}.
                   \label{2}                
\end{eqnarray}
             Here (for $G_\gamma$ to be an immersion) $a=|\gamma_1|\neq 0$, $b=|\gamma_2|\neq 0$, $s_1=arg \gamma_1$,  $s_2=arg \gamma_2$, 
             $\mathcal V=a'b-ab'$,      $\Theta=(as'_1)^2+(bs'_2)^2$ and we only consider things in local  not bothering the case that $s_1'\equiv s_2'\equiv 0$.
      It seems hopeless to solve them at first glance. 
                  However,  by using the arc parameter for curve $(a, b)$ with $a(s)=\cos s$ and $b(s)=\sin s$,
the ODEs can be fortunately solved by

  \begin{equation}\label{dotsss}
                 \begin{pmatrix}
       \dot s_1\\
     \dot s_2
       \end{pmatrix}
       = 
                               \pm               
                             \sqrt{\frac{1}{ 
                                   {C_2  
                 \Big(\cos s\Big)^{2k_1+2}
                  \Big( \sin s\Big)^{2k_2+2}
                  \,
                   -
                   {
                      1-\big(C_1^2-1\big)\cos^2 s
                   }
            }
            }
 }
           \,     
       \begin{pmatrix}
       \tan s\\
     C_1\cot s
       \end{pmatrix}.
                \end{equation}
         Here 
         $ C_1=  \frac{b^2\dot s_2}{a^2\dot s_1}$
         serves as ratio of angular momenta of complex components of $\gamma$ 
         and
         \begin{equation}\label{C2}
         C_2>\min_{s\in (0,\frac{\pi}{2})}
                     \frac{ 1+(C_1^2-1)\cos^2 s}{
                  \Big(\cos s\Big)^{2k_1+2}
                  \Big( \sin s\Big)^{2k_2+2}
                  }
                   \end{equation}
for \eqref{dotsss} making sense in some nonempty set.
Not hard to see that solutions of \eqref{C2} can form a connected open interval in $(0,\frac{\pi}{2})$.
       Denote it  by $\Omega^0_{C_1,C_2}$ and the solution curve over it by $\gamma^0$.

    It should be pointed out that
    the starting $argument$s are not essential since we can use $(e^{i\theta_1}, e^{i\theta_2})$ where $\theta_1, \theta_2\in \mathbb R$ to move $\gamma^0$.
    Note that this commutes with the generating action $\gamma$ on the ambient Euclidean space.
    Now we can assemble $+$ and $-$ parts of \eqref{dotsss} together alternately to get a ``complete"  solution curve 
    $\gamma:
             \R=\cdots \bigcup \Omega^0_{C_1,C_2}\bigcup \Omega^1_{C_1,C_2}\cdots
             \longrightarrow 
             \mathbb S^3
             $.
Based on Harvey-Lawson's extension result for minimal submanifolds
               with $C^1$ joints
               we know that $\gamma$ is analytic.
                  \begin{figure}[h]
	\begin{subfigure}[t]{0.3\textwidth}
		\centering
		\includegraphics[scale=0.45]{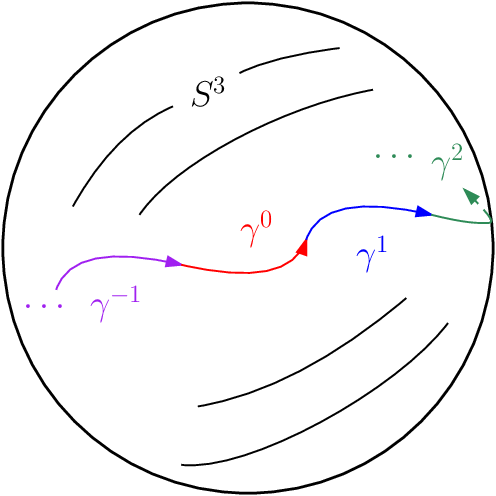}
                              \captionsetup{font={tiny}} 
                             \caption{Complete $\gamma$ in $\mathbb S^3$}
                               \label{fig:1a}
	\end{subfigure}
	\quad\quad \quad\quad 
	\begin{subfigure}[t]{0.53\textwidth}
		\centering
		\includegraphics[scale=0.58]{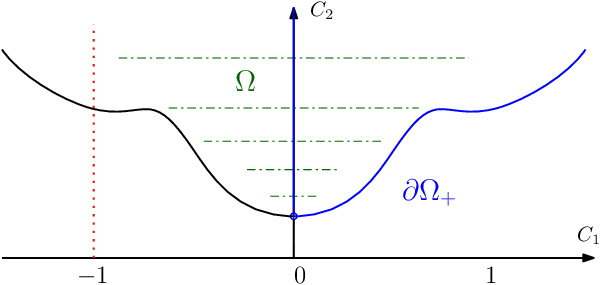}
                              \captionsetup{font={tiny}} 
                             \caption{Domain of allowed $(C_1,C_2)$}
                               \label{fig:1a}
	\end{subfigure}
	 \caption{Generating ``complete" solution curves}
	\end{figure}
So, each point $(C_1, C_2)\in \Omega$ in Figure (B) determines a solution curve.
As a result, we get uncountably many spiral minimal products based on $C$-totally real minimal $M_1$ and $M_2$
and one can apply the algorithm repeatedly for multiple inputs.
Note that the machinery works perfectly in the category of immersions.
 Since every minimal submanifold in sphere  becomes $C$-totally real minimal in some higher dimensional sphere,
the spiral minimal product construction here implies that the moduli spaces of $C$-totally real minimals can be quite big.

{\ }

\section{Global horizontal lifting}\label{P} 
              The most interesting case regarding spiral minimal products may be
              the situation with $C_1=-1$.
              The first observation is that
              every spiral product $G_\gamma$ for $\gamma$ (not necessarily a solution curve) with $C_1=-1$ 
              is $C$-totally real if both inputs $M_1$ and $M_2$ are $C$-totally real.
              Thus, in conjunction with Hopf projection $\pi$, one can further get immersed submanifolds in complex projective space.


              It is worth noting that, if an immersed submanfold $M'$ in $\mathbb CP^n$ is totally real,
              i.e., the complex structure maps its tangential space into its normal space,
                then, around every point, $M'$ has a local horizontal lift (that means exactly a $C$-totally real lift).
                Another well-known fact is that $M'$ is minimal if and only if its (local) horizontal lift is minimal.
                
                When a totally real $M'$ attains the largest possible dimension $n$, we say  it is  a Lagrangian submanifold in $\mathbb CP^n$.
                Similarly,  a $C$-totally real submanifold of $\mathbb S^{2n+1}$ which reaches largest possible dimension $n$ is called Legendrian,
                and the counterpart of dimension $n+1$ in $\mathbb C^{n+1}$  again called Lagrangian.

A key lemma which establishes global correspondence is the following.

                    
                     \begin{lem}[\cite{LZ}]\label{key}
               Given an $n$-dimensional connected embedded minimal Lagrangian submanifold $M'\subset \mathbb CP^n$.
               Then it has a connected embedded horizontal lift $M^n\subset \mathbb S^{2n+1}\subset \mathbb C^{n+1}$
               such
               that the Hopf projection $\pi: M\longrightarrow M'$ gives an $\ell:1$ covering map 
               where $\ell$ is an integer factor of $2(n+1)$.
               Moreover, as a set, $e^{\frac{2\pi i}{\ell}}\cdot M=M$.
               \end{lem}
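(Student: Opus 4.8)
The plan is to build the horizontal lift by a standard monodromy argument for the circle bundle $\pi:\mathbb S^{2n+1}\to\mathbb CP^n$ restricted to $M'$, and then to use minimality together with the special structure of horizontal lifts to pin down the covering degree. First I would recall that since $M'$ is totally real and minimal, around each point it admits a local horizontal lift, and any two horizontal lifts over a connected open set differ by a constant element of $\mathbb S^1$ acting on the fiber (this is because the horizontal distribution on $\mathbb S^{2n+1}$ is the contact distribution, and a horizontal section over a connected base is unique up to the structure-group action by parallel transport in the connection; the ambiguity is a single constant phase). Hence the local lifts glue to a well-defined horizontal submanifold $M\subset\mathbb S^{2n+1}$: concretely, take the pullback circle bundle $\pi^{-1}(M')\to M'$ and inside it the subset swept out by horizontal lifts; this is a union of leaves of the horizontal foliation of $\pi^{-1}(M')$, and I would pick one connected leaf $M$. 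By construction $\pi|_M:M\to M'$ is a covering map, and $e^{i\theta}\cdot M$ is again a horizontal lift for every $\theta$, so the subgroup $\{\theta\in\mathbb S^1: e^{i\theta}\cdot M=M\}$ is a closed subgroup of $\mathbb S^1$, hence either all of $\mathbb S^1$ or a finite cyclic group $\mathbb Z/\ell$; minimality/compactness considerations (a horizontal lift cannot be the whole fiber bundle) force the finite case, and then $\pi|_M$ is exactly an $\ell:1$ cover.

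Next I would identify $\ell$ as a divisor of $2(n+1)$. The key input is that a connected minimal Legendrian submanifold $M^n\subset\mathbb S^{2n+1}$, equivalently a horizontal lift of minimal Lagrangian $M'$, carries a canonical parallel calibrating form: the restriction to $M$ of the holomorphic volume form of $\mathbb C^{n+1}$ (or equivalently the fact that the cone over $M$ is special Lagrangian) has constant phase, and more precisely the Maslov-type form measuring how the tangent $n$-plane of $M$ sits inside $\mathbb C^{n+1}$ is exact with a specific period. Concretely, along $M$ one has a well-defined function $e^{i\beta}$ (the Lagrangian angle of the cone) and minimality means $\beta$ is constant; meanwhile the deck transformation $e^{\frac{2\pi i}{\ell}}$ multiplies the ambient holomorphic $(n+1)$-form by $e^{\frac{2\pi i(n+1)}{\ell}}$, and consistency of $M$ being preserved setwise with $\beta$ being single-valued on $M'$ forces $e^{\frac{2\pi i(n+1)}{\ell}}$ to act trivially on the relevant data, i.e. $\ell \mid 2(n+1)$. (The factor $2$ rather than $n+1$ appears because the ambiguity is really in $\mathbb C^{n+1}\setminus\{0\}/\mathbb R_{>0}=\mathbb S^{2n+1}$ and the cone's calibration form squares naturally; one tracks the phase of $dz_0\wedge\cdots\wedge dz_n$ restricted to the lift versus the phase ambiguity of the lift itself.) I would make this precise by writing the lift as $M=\{e^{i\psi(p)}\hat p: p\in M'\}$ for a multivalued $\psi$, computing that the horizontality condition is $d\psi=-\lambda$ for the contact form $\lambda$ pulled back to $M'$, and that $n\,d\psi$ equals (up to sign) the differential of the Lagrangian angle; since the Lagrangian angle is constant (minimality) and $\psi$ has periods in $\frac{2\pi}{\ell}\mathbb Z$, matching these gives the divisibility.

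Then the remaining assertions are bookkeeping: connectedness of $M$ is arranged by choosing a single leaf; embeddedness of $M$ follows because $M'$ is embedded and the fiber $\mathbb S^1$ is embedded, so $\pi^{-1}(M')$ is an embedded submanifold of $\mathbb S^{2n+1}$ and $M$ is a leaf of a foliation of it, hence embedded (one should check no self-intersections are introduced — this uses that the horizontal lift is locally a graph over $M'$ in the $\mathbb S^1$ direction, so distinct sheets stay disjoint); and the relation $e^{\frac{2\pi i}{\ell}}\cdot M=M$ is immediate once $\ell$ is defined as the order of the stabilizer subgroup above.

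The main obstacle I expect is the determination of the exact bound $\ell\mid 2(n+1)$ — in particular getting the constant right and justifying that the Lagrangian angle of the cone over $M$ is the obstruction, and that its constancy (equivalent to minimality of $M'$, via the well-known fact quoted in the excerpt that $M'$ is minimal iff its horizontal lift is minimal, iff the cone is special Lagrangian). Establishing that the cone over a minimal Legendrian is special Lagrangian, and carefully tracking the phase of the holomorphic volume form under the $\mathbb S^1$-action, is where the real content lies; the covering-space and foliation arguments are routine by comparison.
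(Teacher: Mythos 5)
Your overall strategy coincides with the paper's: patch local horizontal lifts (equivalently, follow a leaf of the horizontal foliation of $\pi^{-1}(M')$) and use the constancy of the Legendrian angle --- the phase of $\det_{\mathbb C}(e_1,\dots,e_n,x)$, equivalently the Lagrangian angle of the special Lagrangian cone over $M$ --- to constrain the monodromy. Two points need repair, though. First, the order of your argument is circular at one spot: you declare $\pi|_M$ a covering map, $M$ embedded, and the stabilizer $\{\theta: e^{i\theta}\cdot M=M\}$ a \emph{closed} subgroup of $\mathbb S^1$ before invoking the angle. A priori the holonomy homomorphism $\pi_1(M')\to\mathbb S^1$ of the flat horizontal connection over $M'$ could have dense image; the leaf would then be dense in $\pi^{-1}(M')$, hence injectively immersed but not embedded, and its setwise stabilizer would be a dense, non-closed subgroup. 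It is precisely the angle computation that rules this out: rotating by $e^{i\theta}$ multiplies $\det_{\mathbb C}(e_1,\dots,e_n,x)$ by $e^{i(n+1)\theta}$ (note the exponent is $n+1$, not the $n$ appearing in your ``$n\,d\psi$'' --- the position vector $x$ rotates along with the tangent frame), and since the angle is constant on the connected minimal $M$, the holonomy must land in a finite group of roots of unity. Embeddedness, closedness, and the finite covering property then \emph{follow from} this, rather than being available beforehand.

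Second, your explanation of the factor $2$ in $2(n+1)$ (``the cone's calibration form squares naturally,'' the quotient by $\mathbb R_{>0}$) is not the right mechanism. The factor $2$ comes from orientability: if $M'$ (hence $M$) is orientable, the oriented frame transports coherently around any loop and constancy of the angle forces $e^{2\pi i(n+1)/\ell}=1$, i.e.\ $\ell\mid n+1$; if $M'$ is non-orientable, the Legendrian angle is only defined up to sign, the condition relaxes to $e^{2\pi i(n+1)/\ell}=\pm1$, and one only gets $\ell\mid 2(n+1)$. The paper states exactly this dichotomy, with the minimal Lagrangian $\mathbb{RP}^2\subset\mathbb{CP}^2$ as the non-orientable example realizing the extra factor. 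With the logical order corrected and the orientability dichotomy in place, the rest of your bookkeeping (choice of a single leaf, the $\mathbb Z/\ell$ action on $M$) goes through as you describe.
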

               
               \begin{figure}[h]
	\centering
	\begin{subfigure}[t]{0.45\textwidth}
		\centering
		\includegraphics[scale=0.7]{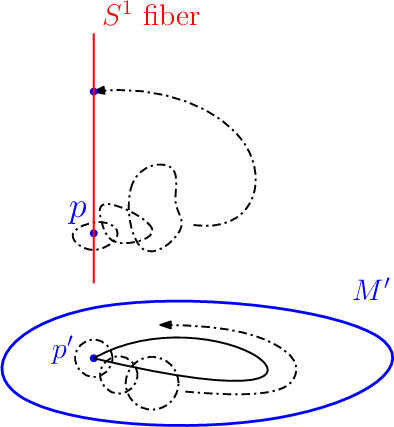}
                              \captionsetup{font={scriptsize}} 
                               \caption{Patching along loops}
                               \label{fig:2a}
	\end{subfigure}
	\begin{subfigure}[t]{0.48\textwidth}
		\centering
	\ \ \ \ \ \ \  \includegraphics[scale=0.68]{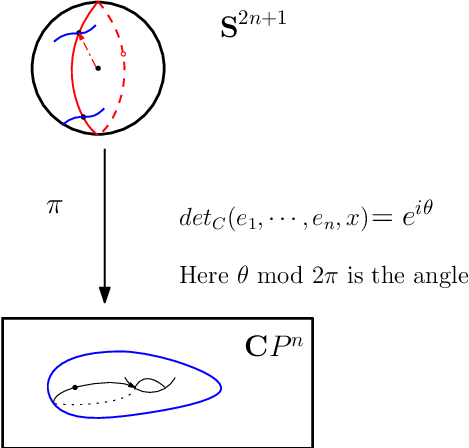}
                     \captionsetup{font={scriptsize}} 
                               \caption{Legendrian/Lagrangian angle}\label{fig:2b}
	\end{subfigure}
 \caption{Lifts subject to Legendrian/Lagrangian angle}
\end{figure}
 
               
               The method we use to prove Lemma \ref{key} is to patch local horizontal lifts along curves and look at the Legendrian angle
    \footnote{
                      Let $\{e_1,\cdots, e_n\}$ be an oriented local orthonormal frame of the tangent space of the Legendrian submanifold and $x$ the position vector (see Figure \ref{fig:2b}). 
                           Then, with respect to the standard complex basis of $\mathbb C^{n+1}$, 
                              the $(n+1)\times (n+1)$ matrix $(e_1,\cdots, e_n, x)$ has a $\mathbb C$-determinant of norm one.
                                   \label{ft2}}
                of a minimal Legendrian submanifold,
               which equivalently is the Lagrangian angle of the cone over the minimal Legendrian (the cone is then special Lagrangian calibrated by the canonical calibration).
               It can be shown that $\ell$ has to be an integer factor of $n+1$ if $M'$ is orientable and otherwise that of $2(n+1)$.
               \footnote{
                         E.g. minimal Lagrangian $\R P^2\subset \mathbb CP^2$ when $n=2$.
                          \label{ft3}
                          }
              
               In fact our method can be refined to deal with immersed situations of closed submanifolds.

  \begin{cor}\label{corimm}
               Given an $n$-dimensional connected immersed closed minimal Lagrangian submanifold $M' \looparrowright \mathbb CP^n$ (as a map).
               Then it has a global horizontal lift $M \looparrowright \mathbb S^{2n+1}\subset \mathbb C^{n+1}$ 
               given by an immersed of a connected closed manifold $M$.
               Moreover,  the immersed $M$ can only have self-intersection  of codimension $\geq 2$ (in $M$).
               \end{cor}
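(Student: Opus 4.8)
The strategy is to transport the patching argument of Lemma~\ref{key} from an embedded image to the abstract domain manifold, and then to read off the self-intersections. Write $f\colon M'\looparrowright\mathbb CP^n$ for the given immersion and form the pulled-back Hopf circle bundle $\widehat M:=f^{*}\mathbb S^{2n+1}\to M'$, whose total space consists of the pairs $(p,v)$ with $v\in\mathbb S^{2n+1}$ and $\pi(v)=f(p)$. The horizontal distribution of the Hopf fibration pulls back to a connection on $\widehat M$ whose curvature is a multiple of $\hat\pi^{*}f^{*}\omega_{FS}$; since $M'$ is Lagrangian (indeed already since it is totally real) this vanishes, so the connection is \emph{flat}. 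Hence $\widehat M$ is foliated by the maximal integral manifolds of its horizontal distribution. Fix one leaf $M$: it is connected, and flatness makes the projection $\rho\colon M\to M'$ a covering map whose group of deck transformations is the holonomy subgroup $\Gamma\subseteq\mathbb S^{1}$. The map $\widehat M\to\mathbb S^{2n+1}$, $(p,v)\mapsto v$, pulls the contact form back to the connection form, so it carries $M$ to a horizontal---hence Legendrian---immersed submanifold; call the composite $g\colon M\looparrowright\mathbb S^{2n+1}$. It is an immersion, because locally $g$ is $f$ post-composed with a local section of $\pi$ and $d\pi$ is an isomorphism on the horizontal distribution, so $dg$ has the same rank as $df$.

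Next I would show that $\Gamma$ is finite, exactly as in the proof of Lemma~\ref{key}. Minimality of $f$ is equivalent to minimality of its horizontal lift, which is equivalent to the Legendrian angle $\beta$ being constant along $M$. On $\widehat M$ the action $v\mapsto e^{i\theta}v$ multiplies every vector of the tautological horizontal frame, and the position vector, by $e^{i\theta}$, hence multiplies $e^{i\beta}$ by $e^{i(n+1)\theta}$. Lifting a loop $\gamma$ in $M'$ to the horizontal path in $M$ from $(p_0,v_0)$ to $\big(p_0,\mathrm{hol}(\gamma)\,v_0\big)$ and using that $\beta$ is constant along it forces $(n+1)\theta_\gamma\in 2\pi\mathbb Z$, with $n+1$ replaced by $2(n+1)$ along an orientation-reversing loop, precisely as in Lemma~\ref{key}. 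Thus $\Gamma$ is cyclic of some order $\ell$ dividing $2(n+1)$, the covering $\rho\colon M\to M'$ is $\ell$-sheeted, and since $M'$ is closed so is $M$; moreover $e^{2\pi i/\ell}\cdot M=M$ as a set.

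It remains to bound the self-intersections of $g$. Since $\pi\circ g=f\circ\rho$ with $\rho$ a local diffeomorphism, any self-intersection of $g$ projects to a self-intersection of $f$, so the self-intersection locus of $g$ has dimension at most that of $f$, and it is enough to bound the latter. At a self-intersection $x=f(p_1)=f(p_2)$ at which the two sheets have \emph{distinct} tangent planes, lift the two local Legendrian sheets through the common lift to their cones $CS_1,CS_2\subset\mathbb C^{n+1}$; because the Legendrian angle is constant along all of $M$, these are special Lagrangian of the \emph{same} phase, so $[CS_1]+[CS_2]$ is calibrated, hence a mass-minimizing integral current of dimension $n+1$. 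If the self-intersection locus of $f$ contained an $(n-1)$-dimensional piece, this minimizing current would be singular along a set of dimension $n=(n+1)-1$, contradicting the interior regularity theory for area-minimizing integral currents in codimension $\ge2$ (the singular set has dimension $\le(n+1)-2$); equivalently, a ``creased'' cone---a sum of two distinct $(n+1)$-planes meeting in codimension one---splits off a factor isometric to two distinct lines through the origin in a plane and so can be strictly shortened, hence is not area-minimizing. The remaining, tangential part of the self-intersection locus is real-analytic and of lower dimension, and where the two sheets actually coincide one only has a covering of an embedded Lagrangian rather than a genuine crossing. Hence the self-intersection locus of $g$ has codimension $\ge2$ in $M$.

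I expect this last step to be the main obstacle: giving a clean description of the self-intersection set---separating genuine transverse crossings from tangential overlaps and from the degenerate possibility that $g$ is a nontrivial covering onto an embedded minimal Lagrangian (for which ``self-intersection of codimension $\ge2$'' must be read as a statement about the genuine crossing locus)---and then ruling out a codimension-one crossing. The decisive device is the constancy of the phase, which turns a union of two sheets into a calibrated, hence minimizing, cycle and lets one invoke regularity theory; the rest (flat bundle, leaf, holonomy, finiteness via the Legendrian angle) is a line-by-line transcription of the proof of Lemma~\ref{key} to the immersed setting.
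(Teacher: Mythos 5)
Your proof is correct and follows essentially the same route as the paper's: patching/parallel-transporting local horizontal lifts (which you package as a leaf of the flat pulled-back Hopf bundle), constraining the holonomy to a finite cyclic group via the Legendrian angle exactly as in Lemma~\ref{key}, and excluding codimension-one self-intersections by observing that the cone over the lift is special Lagrangian, hence calibrated and area-minimizing, so that Almgren's regularity theorem (or the reconnection argument for two planes meeting in codimension one) applies. Your explicit caveat about the degenerate case where the lift merely covers an embedded Legendrian is a worthwhile refinement that the paper's brief proof leaves implicit.
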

               
               \begin{proof}
               By the compactness, it follows that at point $p'\in \mathbb CP^n$ there are at most finitely many local embedded pieces of $M'$ passing through it.
               Then by the finiteness and the arguments to prove Lemma \ref{key} we know that again we can get  
                       an immersion (into $\mathbb S^{2n+1}$) of some connected closed manifold $M$ 
                                  possibly with higher codimension self-intersection
                                     as a global horizontal  lift of $M'$.
               Codimension-one self-intersection of the immersed $M$ cannot occur due to
                the fact that cone over the immersed $M$ is special Lagrangian 
                and 
                that one can apply a calibration argument or the Almgren big regularity theorem. 
               \end{proof}
               
               \begin{rem}\label{rk4imm}
               Any codimension-one self-intersection of $M'$ downstairs must be dissolved by assembling local horizontal lifts.
               Part of self-intersection of higher codimension may possibly survive in the global horizontal lift $M \looparrowright   \mathbb S^{2n+1}$.
               Following the patching procedure, around every point of the abstract $M'$ (before immersion into $\mathbb CP^n$), corresponding pieces of abstract $M$ form an $\ell$-fold cover.
               By the connectedness of $M'$, the fold number $\ell$ is constant everywhere.
           Not hard to see that if $p\in\mathbb S^{2n+1}$ is a self-intersection point of the immersed $M$ 
           then so is $e^{\frac{2\pi i}{\ell}}p$.
               \end{rem}
               
               Even further we want to extend Corollary \ref{corimm} to include
                       stationary Lagrangian 
                       integral currents $T'$ in $\mathbb CP^n$ (say with multiplicity one) with compact support,  connected regular part and  no boundary.
                         Here by stationary Lagrangian 
                                 we mean that the integral current
                                     is stationary and its tangent cone is Lagrangian a.e.
                         One can apply the patching argument for  Lemma \ref{key} based at any regular point.
                         Then by taking closure one can get 
                         the following.
                     
                       \begin{cor}\label{corcurrent}
               Given stationary Lagrangian current $T'$ as mentioned above in  $ \mathbb CP^n$.
               Then it has a special Legendrian current $T$ as global horizontal lifting in  $\mathbb S^{2n+1}\subset \mathbb C^{n+1}$.
               \end{cor}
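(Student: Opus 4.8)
The plan is to run, on the regular part of $T'$, the path-patching construction behind Lemma \ref{key}, and then take closure in $\mathbb S^{2n+1}$; the one genuinely delicate point is that this patching dissolves, rather than creates, boundary over the singular set of $T'$. Write $\mathcal R$ for the regular part of $T'$ and $\mathcal S:=\mathrm{spt}\,T'\setminus\mathcal R$ for its singular set. Since $T'$ has multiplicity one, $\mathcal R$ is a connected embedded minimal Lagrangian submanifold of $\mathbb CP^n$ (not necessarily complete), and since $T'$ is a stationary integral current, Allard's regularity theorem together with the monotonicity formula gives $\mathcal H^n(\mathcal S)=0$, so that $T'=T'\llcorner\mathcal R$. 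First I would cover $\mathcal R$ by horizontal coordinate charts and glue the local horizontal lifts along paths exactly as in the proof of Lemma \ref{key}: each local lift is unique up to the $\mathbb S^1$-action and is automatically minimal Legendrian, and the only obstruction to gluing is the monodromy of the Legendrian angle around loops of $\mathcal R$; since this is a pointwise consequence of minimality and the Lagrangian condition it is insensitive to non-compactness, and the monodromy takes values in the group of $\ell$-th roots of unity for some integer factor $\ell$ of $2(n+1)$ (of $n+1$ if $\mathcal R$ is orientable). This yields a connected embedded minimal Legendrian $M_0\subset\mathbb S^{2n+1}$ with $\pi|_{M_0}\colon M_0\to\mathcal R$ an $\ell$-fold covering and $e^{2\pi i/\ell}\cdot M_0=M_0$ as sets; I let $T_0$ be the associated multiplicity-one rectifiable current on $W:=\mathbb S^{2n+1}\setminus\pi^{-1}(\mathcal S)$, oriented compatibly with $T'$.

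Next, since $\pi$ is a Riemannian submersion with one-dimensional fibers and $M_0$ is horizontal, $\pi|_{M_0}$ is a local isometry, so $\mathbf{M}\big(T_0\llcorner\pi^{-1}(V)\big)=\ell\,\mathbf{M}\big(T'\llcorner V\big)$ for every open $V\subset\mathbb CP^n$; in particular $\mathbf{M}(T_0)=\ell\,\mathbf{M}(T')<\infty$. I let $T$ be the rectifiable current carried by the closure $\overline{M_0}$ with the multiplicity and orientation inherited from $M_0$. As $\pi^{-1}(\mathcal S)$ is $\mathcal H^n$-null this is simply the trivial extension of $T_0$ across $\pi^{-1}(\mathcal S)$, so $T$ has finite mass, compact support inside $\pi^{-1}(\mathrm{spt}\,T')$, satisfies $\pi_\#T=\ell\,T'$ and $e^{2\pi i/\ell}\cdot T=T$, and has $M_0$ as regular part. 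Consequently the cone $C(T)$ over $T$ in $\mathbb C^{n+1}$ coincides, away from the origin and from $C(\pi^{-1}(\mathcal S))$, with the smooth minimal Lagrangian cone $C(M_0)$, whose Lagrangian angle is a constant $\theta_0$ because $M_0$ is connected; hence $C(T)$ is calibrated by $\mathrm{Re}\,(e^{i\theta_0}\,dz_1\wedge\cdots\wedge dz_{n+1})$ off a set of zero mass, which is why $T$ deserves to be called a special Legendrian current.

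It remains to prove $\partial T=0$, which also exhibits $C(T)$ as a genuine special Lagrangian integral cycle. Since $\partial(T'\llcorner\mathcal R)=\partial T'=0$ in $\mathbb CP^n\setminus\mathcal S$ and lifting is a local isometry over $W$, we have $\partial T_0=0$ in $W$, hence $\mathrm{spt}(\partial T)\subset\pi^{-1}(\mathcal S)$; and the monotonicity formula for the stationary $T'$ gives the density bound $\|T\|(B_r(p))\le C\,r^n$ for all $p\in\mathbb S^{2n+1}$. The essential input is the smallness of the singular set: arguing as in the proof of Corollary \ref{corimm} — the cone $C(M_0)$ being special Lagrangian is calibrated by a closed form, hence area-minimizing in any ball in which it defines an integral current with boundary, so Almgren's big regularity theorem bounds the dimension of the singular set of $C(T)$, which contains $C(\pi^{-1}(\mathcal S))\setminus\{0\}$, by $n-1$ — one concludes $\dim\pi^{-1}(\mathcal S)\le n-2$ and in particular $\mathcal H^{n-1}(\pi^{-1}(\mathcal S))=0$. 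Granting this, a routine slicing/covering argument finishes: cover $\pi^{-1}(\mathcal S)$ by balls $B_{r_i}(p_i)$ with $\sum_i r_i^{\,n-1}<\varepsilon$, choose for each $i$ a radius with $\mathbf{M}\big(\langle T,\mathrm{dist}_{p_i},r_i\rangle\big)\le r_i^{-1}\|T\|(B_{2r_i}(p_i))\le C\,r_i^{\,n-1}$, and use $\partial\big(T\llcorner B_{r_i}(p_i)\big)=\langle T,\mathrm{dist}_{p_i},r_i\rangle+(\partial T)\llcorner B_{r_i}(p_i)$ to bound $\mathbf{M}(\partial T)\le C\,\varepsilon$; letting $\varepsilon\to0$ yields $\partial T=0$, so $C(T)$ is a multiplicity-one special Lagrangian integral cycle in $\mathbb C^{n+1}$ horizontally lifting $T'$.

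I expect this last step to be the main obstacle. For a general stationary integral current the singular set can be too large for the slicing argument, so the proof must use the Lagrangian hypothesis substantively, through the special Lagrangian cone $C(M_0)$; and the technical heart is upgrading the merely local calibration of $C(M_0)$ to the dimension bound on the singular set across $\pi^{-1}(\mathcal S)$, precisely where $C(T)$ is not a priori known to be a well-behaved integral current. Everything else — the patching, the mass identity from the Riemannian submersion, and the passage to the closure — is routine.
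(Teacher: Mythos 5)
Your core construction --- patching local horizontal lifts along paths in the connected regular part $\mathcal R$ of $T'$, using the rigidity of the Legendrian angle to see that the monodromy lies in a finite cyclic group, obtaining an $\ell$-fold cover $M_0\to\mathcal R$ with $e^{2\pi i/\ell}\cdot M_0=M_0$, and then taking the closure $\overline{M_0}$ in $\mathbb S^{2n+1}$ --- is exactly the paper's proof, which consists of precisely this sketch (``apply the patching argument for Lemma \ref{key} based at any regular point, then take closure''). Up to the definition of $T$ as the current carried by $\overline{M_0}$, your argument is correct and is the intended one; the mass identity $\mathbf{M}(T)=\ell\,\mathbf{M}(T')$ coming from the Riemannian submersion is a useful point the paper leaves implicit.

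The extra step in which you prove $\partial T=0$ contains a genuine gap, which you half-acknowledge without drawing the conclusion. Your input $\mathcal H^{n-1}\big(\overline{M_0}\setminus M_0\big)=0$ is extracted by applying the calibration/Almgren argument to $C(T)$; but that argument requires $C(T)$ to already be a calibrated (hence boundaryless, area-minimizing) integral current, which is exactly what is at stake --- the corresponding step in Corollary \ref{corimm} is legitimate only because there the cone is taken over a smooth closed immersed Legendrian and is a calibrated integral cycle from the outset. Moreover the dimension bound you want is not available in this generality: the paper explicitly allows stationary Lagrangian $T'$ with $\mathcal S^{n-1}\setminus\mathcal S^{n-2}\neq\emptyset$ (flat, self-intersection-type codimension-one strata; see the discussion around Figure \ref{fig:a5} and Remark \ref{rk4imm}), and even where $\dim\mathcal S\le n-2$ one only gets $\dim\pi^{-1}(\mathcal S)\le\dim\mathcal S+1$, so the covering bound $\sum_i r_i^{\,n-1}<\varepsilon$ is not justified for the set you cover. (Also note $\overline{M_0}\setminus M_0$ meets each Hopf fiber over $\mathcal S$ in at most $\ell$ points, not the whole circle, so $C(\pi^{-1}(\mathcal S))\setminus\{0\}$ need not sit inside $\mathrm{spt}\,C(T)$ at all.) The paper avoids all of this by asserting only that $T$ is a global horizontal lift as a current, not that it is a cycle; a correct boundary analysis would have to treat the dissolved codimension-one strata separately (there the lift is locally a union of disjoint smooth minimal sheets, so no boundary is created) and control the deeper strata by a covering argument driven by the stratification of \cite{NV} rather than by minimality of $C(T)$.
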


                   \begin{figure}[h]
	\centering
               \includegraphics[scale=0.75]{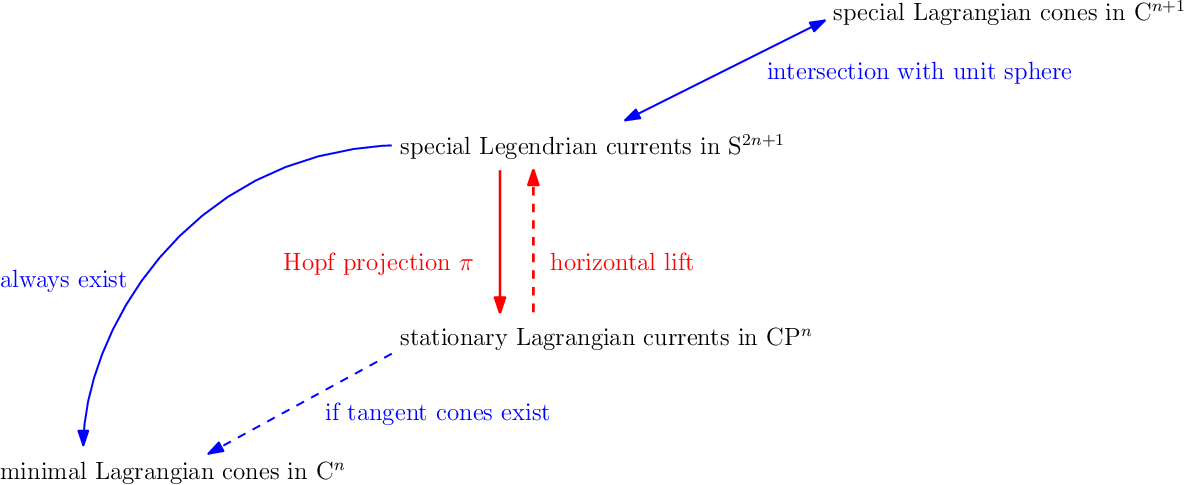}
               \caption{\small Global picture of the framework}
          \end{figure}

               A minimal Lagrangian cone is a minimal cone which is Lagrangian a.e.
               One may encounter local example like 
                   span$_\R\{
                                 1, 
                                   j\}$ 
                                $\bigcup$ 
                                  span$_\R\{
                                              i, 
                                                    k\}$ 
                                         in quaternion $\mathbb H$.
                                         Away from the origin, the cone is Lagrangian (different pieces may have different Lagrangian angles).
               Although its local regular part is disconnected, the patching horizontal lifts can still run through all regular part due to the connectedness assumption.
                         
                         Another  possibility which may cause disconnectedness of the regular part of $T'$ is the codimension-one singularities.
                         According to the \cite{NV}, 
                         the singular set of $T'$ has a stratiﬁcation structure 
                         with top level $\mathcal S^{n-1}-\mathcal S^{n-2}$, if not empty, $(n-1)$-rectifiable.
                        If at some $p'$ in $\mathcal S^{n-1}-\mathcal S^{n-2}$ 
                        tangent cone of $T'$ exists and $T_{p'} \mathcal S^{n-1}=\R^{n-1}$, 
                        then
                         it must have the open-book structure along the spine $T_{p'} \mathcal S^{n-1}$  according to  Allard's boundary regularity paper \cite{A}.
                         Due to the stationary assumption, either the cone is several $n$-planes through the spine 
                               or otherwise a different kind of collection of half $n$-planes balanced along the spine.
                         If the former occurs and $\mathcal S^{n-1}-\mathcal S^{n-2}$ were  $C^{1, \alpha}$,
                         then  locally these codimension-one singularities arise from self-intersections as already seen in the immersed situations
                         and moreover the patching procedure can pass through $p'\in\mathcal S^{n-1}-\mathcal S^{n-2}$.
                         However the latter situation, if existed  in local (see Figure \ref{fig:a5}),  forms an obstruction for a horizontal lift.
                         So it seems that the connectedness assumption on the regular part of $T'$ is necessary for the procedure.

                                   \begin{figure}[h]
		 \includegraphics[scale=0.85]{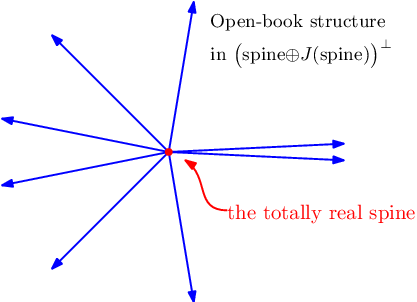}
                             \caption{A minimal Lagrangian $n$-cone with an $(n-1)$-spine in $\C^n$} 
                          \label{fig:a5}
                             \end{figure}

                         One more remark about the difference between current and immersed cases is
                         this.
                         Assume $\mathcal S^{n-1}-\mathcal S^{n-2}=\emptyset$ and $p'\in \mathcal S^{n-2}\neq \emptyset$.
                         Suppose that $w\leq n-2$ is the largest integer for $p'\in\mathcal S^{w}$.
                         
                         1. If  supp$(T')$ $-\mathcal S^{w}$ is connected inside $B_\epsilon(p')$ for all $0<\epsilon<$ some $\epsilon_0$,
                         then
                               along any small loop (staring from and ending at $q'$) avoiding $\mathcal S^{w}$ the patching horizontal lifts must coincide
                               as the lifted curve cannot have enough length to connect  different $q$ and $\tilde q$ upstairs 
                               (over the fiber for $q'$)
                               with the allowed discrete distances in Lemma \ref{key}.
                               As a result, the local singularity structure will survive in the global horizontal lift after taking the closure of the global horizontal lift for the connected regular part
                               (in proving Corollary \ref{corcurrent}). 
                               
                               2. If in any sufficiently small scale supp$(T')$ $-\mathcal S^{w}$ is not connected,
                               then in the closure of horizontal lift
                               the singularity stratiﬁcation may be decomposed and reassembled accordingly.
                               See Remark \ref{rk4imm}.
                         

              {\ }

             \section{Delaunay construction for minimal Lagrangians in complex projective spaces}\label{P3}

            Now the framework of \cite{LZ} can be broadened
            and
            generalized Delaunay construction for minimal/stationary Lagrangians in complex projective spaces are the followings.
                   
                   \begin{thm}[Delaunay construction 1]\label{DC1}
                   Let $M_1' \looparrowright \mathbb CP^{n_1}$
                   and
                          $M_2'  \looparrowright \mathbb CP^{n_2}$
                          be two connected immersed closed minimal Lagrangians.
                          Then, based on them, uncountably many immersed minimal Lagrangians can be constructed  in $\mathbb CP^{n_1+n_2+1}$. 
                   \end{thm}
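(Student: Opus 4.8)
The plan is to assemble three ingredients that are already available: the global horizontal lifting of Corollary~\ref{corimm}, the spiral minimal product machinery with ratio $C_1=-1$, and the Hopf projection. First, lift the two inputs: applying Corollary~\ref{corimm} to each $M_i'\looparrowright\mathbb{CP}^{n_i}$ produces a global horizontal lift $M_i\looparrowright\mathbb{S}^{2n_i+1}\subset\mathbb{C}^{n_i+1}$ with $M_i$ a connected closed manifold whose immersion has only self-intersection of codimension $\geq 2$. Being horizontal and of the maximal dimension $n_i$, each $M_i$ is a Legendrian immersion; and it is minimal since minimality of a totally real submanifold of $\mathbb{CP}^{n_i}$ is equivalent to minimality of its horizontal lift. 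So $M_1,M_2$ are $C$-totally real minimal immersions of dimensions $k_1=n_1$ and $k_2=n_2$.

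Next, feed $M_1,M_2$ into the spiral product \eqref{expression}; the construction works verbatim for immersed inputs. Fix $C_1=-1$, so $C_1^2-1=0$ and the admissibility requirement \eqref{C2} reduces to
\[
C_2>\min_{s\in(0,\pi/2)}\big[(\cos s)^{2n_1+2}(\sin s)^{2n_2+2}\big]^{-1},
\]
a finite positive threshold. Hence $C_1=-1$ is admissible and there is a nonempty open interval $I$ of admissible $C_2$. For each $C_2\in I$, after normalizing the inessential starting arguments one assembles from \eqref{dotsss} the complete analytic generating curve $\gamma$ on $\mathbb{R}$ and obtains a minimal immersion
\[
G_\gamma:\ \mathbb{R}\times M_1\times M_2\ \longrightarrow\ \mathbb{S}^{2(n_1+n_2+1)+1}\subset\mathbb{C}^{(n_1+1)+(n_2+1)}
\]
of dimension $1+n_1+n_2$. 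Since $C_1=-1$, the product $G_\gamma$ is $C$-totally real, and being of the maximal dimension $n_1+n_2+1$ it is a minimal Legendrian immersion.

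Now compose with the Hopf projection $\pi:\mathbb{S}^{2(n_1+n_2+1)+1}\to\mathbb{CP}^{n_1+n_2+1}$. The tangent spaces of $G_\gamma$ are horizontal, so $d\pi$ is injective on them and $\pi\circ G_\gamma$ is again an immersion; it is totally real of the maximal dimension, hence Lagrangian, and minimal by the equivalence above read downward. Thus each $C_2\in I$ yields a minimal Lagrangian immersion of $\mathbb{R}\times M_1\times M_2$ into $\mathbb{CP}^{n_1+n_2+1}$. Generically $\gamma$ is not periodic, so these Lagrangians are complete but non-closed, consistent with the statement not asserting closedness; for parameter values at which $\gamma$ closes up one gets closed examples as a bonus.

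Finally, letting $C_2$ range over the uncountable interval $I$ produces the family; the substantive point, and the main obstacle, is to check that distinct parameters give non-congruent Lagrangians. I would argue this at the level of induced metrics. A short computation (using $Jf_i\perp TM_i$, which kills all cross terms) shows that the metric pulled back by $G_\gamma$ is the doubly warped product
\[
(1+\Theta)\,ds^2+\cos^2 s\;g_1+\sin^2 s\;g_2 ,
\]
with $s$ the profile arc length and $g_1,g_2$ the induced metrics on $M_1,M_2$. Hence the ratio $\big(\sup\cos s/\inf\cos s\big)^{n_1}$ of volumes of the $M_1$-fibers is an isometry invariant of this metric, and for $C_1=-1$ the range of $s$ is exactly the superlevel set $\{(\cos s)^{2n_1+2}(\sin s)^{2n_2+2}>C_2^{-1}\}$, whose endpoints depend strictly monotonically on $C_2$; so the invariant is a strictly monotone function of $C_2$. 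Therefore the family contains uncountably many pairwise non-congruent minimal Lagrangians. The delicate part is precisely to package the $C_2$-dependence as a bona fide congruence invariant that the isometry group of $\mathbb{CP}^{n_1+n_2+1}$ cannot absorb; the warped-product structure, paralleling the classical Delaunay family, is the natural device, the key computational input being the monotone dependence of the range of $s$ on $C_2$.
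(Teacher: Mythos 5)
Your proposal follows exactly the paper's route: lift $M_1',M_2'$ to $C$-totally real (Legendrian) minimal immersions $M_1,M_2$ via Corollary~\ref{corimm}, form the spiral minimal product $G_\gamma$ for a solution curve with $C_1=-1$, and push down by the Hopf projection to get a minimal Lagrangian immersion of $\mathbb R\times M_1\times M_2$ into $\mathbb{CP}^{n_1+n_2+1}$ --- this is verbatim the paper's three-line proof. Your extra admissibility check for $C_1=-1$ and the sketched warped-product/monotonicity argument that distinct $C_2$ yield non-congruent examples go beyond what the paper records (it takes uncountability directly from the $C_2$-parameter), but they are consistent with, not a departure from, its approach.
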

                   
                   \begin{proof}
                   By Corollary \ref{corimm}, we have immersed closed submanifolds $M_1\looparrowright \mathbb S^{2n_1+1}$ and $M_2 \looparrowright \mathbb S^{2n_2+1}$ 
                   as horizontal liftings for $M_1'$ and $M_2'$.
                   Using any solution curve $\gamma$ with $C_1=-1$, 
                   a minimal Legendrian immersion $G_\gamma$ can be gained.
                   Furthermore, $\pi\circ G_\gamma$ gives a minimal Lagrangian immersion from $\R\times M_1\times M_2$ into $\mathbb CP^{n_1+n_2+1}$.
                   \end{proof}
                   
                   Note that, in general $\pi\circ G_\gamma$ may not induce a de Rham current, let alone an integral current.
                   One problem is that  the image of $\pi\circ G_\gamma$ may not be locally Hausdorff $(n_1+n_2+1)$-measurable.
                   The local behavior could be similar to $\bigcup_{y\in \mathbb Q}\{(x, y): x\in \R\}$ in $\R^2$.
                   The critical quantities to control the behavior of $G_\gamma$ are
                   
                              \begin{equation}\label{J}
                              J_{1}(C_2)=                                  \mathlarger{\int}_{\Omega^0_{-1,C_2}}  \dfrac{\tan s }{\sqrt{C_2\Delta^2-1}} 
                                         \, ds 
              \text{     \ \        \ \ \                   and\ \ \ \  \ \ }
                               J_{2}(C_2)=                                   \mathlarger{\int}_{\Omega^0_{-1,C_2}}  \dfrac{\cot s}{\sqrt{C_2\Delta^2-1}} \, ds .
                                             \end{equation}
                   where $\Delta=(\cos s)^{n_1+1} (\sin s)^{n_2+1}$.
                   They measure the argument gage sizes of $\gamma_1$ and $\gamma_2$ (in opposite directions due to the sign of $C_1=-1$) when running through $\gamma^0$ over $\Omega^0_{-1,C_2}$.
                   Moreover, it has been proved in \cite{LZ} that 
                    \begin{equation}\label{2Delta}
                        (n_1+1) J_{1}(C_2) =(n_2+1) J_{2} (C_2).
                                            \end{equation}
                   So solution curve $\gamma$ with $C_1=-1$ and $a,b$ non-constant
                   \footnote{When $a, b$ are constant, it follows by \eqref{2Delta} that the argument slope $c=-\frac{n_1+1}{n_2+1}$ in \cite{LZ} corresponds to $C_1=-1$.
                   So the solution curve $\gamma$ now is an embedded closed curve.}
                    factors through simple closed curve if and only if $J_{1}(C_2)\in \pi \mathbb Q$.
                   This is exactly a necessary and sufficient condition for  
                      the image of $\pi\circ G_\gamma$
                       to be Hausdorff $(n_1+n_2+1)$-measurable.
                       Another issue is about the orientability. If the regular part of  the image of $\pi\circ G_\gamma$ is orientable,
                       then it  induces a stationary Lagrangian integral current with multiplicity one in $\mathbb CP^{n_1+n_2+1}$;
                       otherwise a stationary Lagrangian integral current mod 2 (see footnote \ref{ft3}).

                   Since $\mathbb C^{n_1+n_2+2}=\mathbb C^{n_1+1}\oplus \mathbb C^{n_2+1}$, with the obvious choice of homogeneous coordinates
(by slightly abusing symbols) the minimal Lagrangian immersion in Theorem \ref{DC1}, up to congruency, is  
$$
\left[
\gamma_1\cdot M_1,\, \ \gamma_2\cdot M_2
\right]
$$
in $\mathbb CP^{n_1+n_2+1}$.

          {\ }

Now let us mention the version for currents
and focus on stationary Lagrangian integral currents mod 2 in complex projective spaces 
with compact support,  connected regular part and  no boundary.

                    \begin{thm}[Delaunay construction 2]\label{DC2}
       Let $T_1'$  
                   and
                          $T_2'$ 
                          be two stationary Lagrangian integral currents mod 2 in $\mathbb CP^{n_1}$ and $\mathbb CP^{n_2}$ ($n_1+n_2>0$) as above.
                          Then, based on them, infinitely many stationary Lagrangian currents mod 2 can be constructed  in $\mathbb CP^{n_1+n_2+1}$. 
                   \end{thm}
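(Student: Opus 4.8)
The plan is to carry the proof of Theorem~\ref{DC1} one level up, replacing immersions by rectifiable currents mod~$2$ and Corollary~\ref{corimm} by Corollary~\ref{corcurrent}. Write $m=n_1+n_2+1$ and $N=n_1+n_2+2$, so $\mathbb{CP}^m=\mathbb P(\mathbb C^N)$ and $\mathbb S^{2m+1}=\mathbb S^{2N-1}\subset\mathbb C^N$. First I would lift the inputs: by Corollary~\ref{corcurrent}, $T_1'$ and $T_2'$ possess global horizontal liftings which are special Legendrian currents $T_1$ in $\mathbb S^{2n_1+1}$ and $T_2$ in $\mathbb S^{2n_2+1}$, still with compact support, connected regular part and no boundary; these replace the lifts $M_1,M_2$ from Corollary~\ref{corimm}.

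Next I would choose the profile curve. Keep $C_1=-1$; then $C_1^2-1=0$, so \eqref{C2} reads $C_2>C_2^{\min}:=\min_{s\in(0,\pi/2)}\big[(\cos s)^{2n_1+2}(\sin s)^{2n_2+2}\big]^{-1}$, and for each such $C_2$ the closure $\overline{\Omega^0_{-1,C_2}}$ is a compact subinterval of $(0,\pi/2)$. Consequently both $a=|\gamma_1|=\cos s$ and $b=|\gamma_2|=\sin s$ range over a fixed compact subset of $(0,1)$ along the solution curve, so that the spiral product map
\[
\Gamma_\gamma:\ S^1\times\mathbb S^{2n_1+1}\times\mathbb S^{2n_2+1}\longrightarrow\mathbb S^{2N-1},\qquad (t,x,y)\longmapsto\big(\gamma_1(t)\,x,\ \gamma_2(t)\,y\big),
\]
is globally defined and real-analytic, hence Lipschitz. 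Recalling from \eqref{J}--\eqref{2Delta} and the discussion after Theorem~\ref{DC1} that the solution curve with $C_1=-1$ and $a,b$ non-constant factors through a simple closed curve in $\mathbb S^3$ iff $J_1(C_2)\in\pi\mathbb Q$ --- which is exactly the condition for the image of $\pi\circ G_\gamma$ to be Hausdorff $m$-measurable --- and that $C_2\mapsto J_1(C_2)$ is continuous and non-constant on $(C_2^{\min},\infty)$, I conclude its image contains an interval and hence infinitely many rational multiples of $\pi$. Each such $C_2$ gives an embedded closed $\gamma\subset\mathbb S^3$; this is the origin of the count ``infinitely many''.

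For one such $\gamma$, I would set $G:=(\Gamma_\gamma)_*\big([\![S^1]\!]\times T_1\times T_2\big)$ in $\mathbb S^{2N-1}$ and $S:=\pi_*G$ in $\mathbb{CP}^m$, and then verify that $S$ is a stationary Lagrangian current mod~$2$ of the required kind. Finiteness of mass for $G$ and $S$ follows from compactness of the domain and the Lipschitz bound; $\partial G=0$ and $\partial S=0$ because $\partial$ commutes with cartesian products and pushforwards and $T_1,T_2,[\![S^1]\!]$ are boundaryless; the supports are compact; and on the regular part $S$ is locally $\pi\circ G_\gamma$ applied to regular pieces of $T_1,T_2$, hence a minimal Lagrangian submanifold of $\mathbb{CP}^m$ by the spiral--minimal--product computation behind \eqref{1}--\eqref{dotsss}, so $S$ is Lagrangian $\|S\|$-a.e.\ and has connected regular part (connectedness of $S^1$, $\mathrm{reg}\,T_1$, $\mathrm{reg}\,T_2$, plus the codimension-$\geq2$ control of self-intersections as in the proof of Corollary~\ref{corimm}). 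For stationarity I would pass to the cone: on its (connected) regular part, $G$ is a spiral minimal product of two special Legendrians along a $C_1=-1$ solution curve, hence Legendrian with a single constant Legendrian angle $\theta_0$, so the cone $\widehat G$ over $G$ in $\mathbb C^N$ has approximate tangent $N$-plane calibrated by the closed constant-coefficient form $\mathrm{Re}\big(e^{-i\theta_0}\,dz_1\wedge\cdots\wedge dz_N\big)$ at $\|\widehat G\|$-a.e.\ point. A rectifiable current mod~$2$ whose approximate tangent planes are a.e.\ calibrated by a fixed closed calibration is homologically mass-minimizing mod~$2$, in particular stationary; so $\widehat G$ is a special Lagrangian current mod~$2$, its link $G$ is stationary Legendrian in $\mathbb S^{2N-1}$, and, the Hopf map being a Riemannian submersion and $G$ horizontal, $S=\pi_*G$ is stationary Lagrangian in $\mathbb{CP}^m$ --- the current form of ``minimal $\Leftrightarrow$ horizontal lift minimal''. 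That distinct admissible $C_2$ give pairwise non-congruent $S$ (after discarding finitely many coincidences) I would check separately, using congruence invariants such as the total mass $\mathbb M(S)$, which is a $\gamma$-weighted length times $\mathbb M(T_1)\,\mathbb M(T_2)$ and varies with $C_2$.

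The main obstacle I anticipate is measure-theoretic bookkeeping, not new geometry. Two points need care. First, that $S$ is a genuine $m$-dimensional rectifiable current with locally finite mass: this is precisely why $J_1(C_2)\in\pi\mathbb Q$ is imposed --- for $J_1(C_2)/\pi$ irrational the image of $\pi\circ G_\gamma$ is a dense winding of leaves and fails to be Hausdorff $m$-measurable, so no current exists and those parameters must be discarded. Second, promoting ``calibrated on the regular part'' to ``calibrated as a current mod~$2$'': one must check that the singular sets of $T_1,T_2$ --- including any codimension-one, open-book pieces of the type discussed after Corollary~\ref{corcurrent} --- contribute no $m$-dimensional mass and create no boundary after passing through $\Gamma_\gamma$, $\pi$ and coning, so that $\widehat G$ is genuinely the mod-$2$ current it should be and the calibration equality holds $\|\widehat G\|$-a.e. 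It is exactly to avoid a naive first-variation argument --- which would stumble on such codimension-one singularities --- that stationarity is routed through the calibration. Granting these, the remaining verifications (vanishing boundary, compact support, connectedness of the regular part, non-congruence across $C_2$) are routine.
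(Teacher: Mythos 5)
Your proposal follows essentially the same route as the paper: lift $T_1',T_2'$ via Corollary~\ref{corcurrent} to stationary Legendrian currents, apply the spiral product with $C_1=-1$ and $J_1(C_2)\in\pi\mathbb Q$, and push down by the Hopf map; the paper's proof is just a terse version of this, leaving implicit the calibration/stationarity bookkeeping and the count of admissible $C_2$ that you spell out. Your elaborations are consistent with the surrounding discussion in the paper (the measurability criterion after Theorem~\ref{DC1} and the calibration arguments in Corollary~\ref{corimm} and Section~4), so no substantive difference.
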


                     \begin{proof}
                   Note that Corollary \ref{corcurrent} is valid for stationary Lagrangian integral currents mod 2 
                   with compact support,  connected regular part and  no boundary.
                  We can still have global horizontal lifts 
                   (stationary Legendrian multiplicity one integral current)
                          $T_1$ and $T_2$ in $\mathbb S^{2n_1+1}$ and $\mathbb S^{2n_2+1}$ respectively.
                   Due to the connectedness, 
                   both regular parts of $T_1$ and $T_2$ are connected and orientable. 
                   Similarly as argued  in the above, 
                       every solution curve $\gamma$ with $C_1=-1$ and $J_{1}(C_2)\in \pi \mathbb Q$
                   can induce a stationary Legendrian current by the image of $G_\gamma(T_1, T_2)$ 
                    (where $G_\gamma$ regarded as a generating action).
                So can 
                         the image of  $\pi\circ G_\gamma$ 
                          for a stationary Lagrangian integral current mod 2 in $\mathbb CP^{n_1+n_2+1}$.
                   \end{proof}


{\ }

 \section{Special Lagrangian cones}\label{P4}

(a) Based on Lemma \ref{key}, every connected embedded closed minimal Lagrangian submanifold will have an embedded closed special Legendrian submanifold as global horizontal lift.
%
By applying our spiral minimal products for two embedded closed special Legendrian submanifolds (of dimension $n_1, n_2$ satisfying $n_1+n_2>0$) with $C_1=-1$ and $ J_{1}(C_2)\in \pi\mathbb Q$,
we get infinitely many embedded closed special Legendrian submanifolds in $\mathbb S^{2n_1+2n_2+1}$,
hence regular special Lagrangian cones in $\C^{n_1+n_2+1}$.

(b) The work \cite{CM} establishes the result that for every positive integer $N$ there exist an $N$-dimensional family of minimal Lagrangian tori in $\mathbb CP^2$  
 and hence an $N$-dimensional family of special  Legendrian tori in $\mathbb S^5$.
        Let $M_1$ run all these uncountably many choices of special  Legendrian tori and $M_2$ be a global horizontal lift of some connected embedded closed minimal Lagrangian submanifold.
        Then the spiral minimal products with $C_1=-1$ and $ J_{1}(C_2)\in \pi\mathbb Q$
        lead to  uncountably many special  Legendrian submanifolds, the cones over which form uncountably many regular special Lagrangian cones.
        
  (c)      Similarly, the moduli space of minimal Lagrangian immersions of connected closed submanfiolds in complex projective spaces 
        can be ``embedded" into the moduli space of  special Legendrian immersions of connected closed submanfiolds in complex projective spaces in odd dimensional spheres.
        
     (d)  In the realm of geometric measure theory, 
        the framework can start from 
                   stationary Lagrangian integral currents mod 2 in complex projective spaces
                   with compact support,  connected regular part and  no boundary.
                   Note that each of their horizontal lifts 
                   automatically has compact support,  orientable connected regular part and  no boundary.
                   Hence each induces a stationary Legendrian current, the cone of which is a special Lagrangian cone.
         
         (e) If one uses a solution curve $\gamma$ with $C_1=-1$ and $ J_{1}(C_2)\notin \pi\mathbb Q$ to replace that  in (a),
              then based on  any pair of connected embedded closed minimal Legendrian submanifolds
              their spiral minimal product  $G_\gamma$ 
              is a connected immersed non-compact minimal Legendrian submanifold without self-intersection
              (again by a calibration argument or the Almgren big regularity theorem).
              It can be observed that the cone over it is a ``regular" special Lagrangian cone with infinite density everywhere in its support.
              This reveals that 
                   regular special Lagrangian cones (assigned with finite multiplicity) are relatively rare in the family in the sense of $C_2$.
                   Similar phenomena exist as well for the categories of (c) and (d).
                   
                   
                   Although
                    with $C_1=-1$ and a convergent sequence $\{C_2\}$
 the  local solution curves $\{\gamma^0\}$   converge to a limit local solution curve,
          the ``complete" solution curves behave dramatically differently in large scale.
          For any $C_2'<C_2''$ with $J_1(C_2')\neq J_1(C_2'')$, 
          there exists $C_2'<C_2<C_2''$  such that $J_1(C_2)\notin \pi\mathbb Q$ which induces a special Legendrian current of infinite mass
          separating those corresponding to $C_2'$ and $C_2''$ (in particular for those with $J_1(C_2'),\, J_1(C_2'')\in \pi\mathbb Q$). 
          There might be some deeper mysterious reason behind.
          
{\ }

%
%
%
                       %
                       %
                       %


\begin{bibdiv}
\begin{biblist}

\bib{A}{article}{
    author={Allard, {Williams K.}},
    title={On the first variation of a varifold: Boundary behavior},
    journal={Ann. of Math.},
    volume={101},
    date={1975},
    pages={418--446},
}

\bib{CM}{article}{
    author={Carberry, Emma},
    author={McIntosh, Ian},
    title={Minimal Lagrangian 2-tori in $\mathbb CP^2$ come in real families of every dimension},
    journal={J. Lond. Math. Soc.},
    volume={69},
    date={2004},
    pages={531--544},
}

\bib{LZ}{article}{
author={Li, Haizhong},
        author={Zhang, Yongsheng},
    title={Spiral Minimal Products},
    journal={arXiv:2306.03328},
}

\bib{NV}{article}{
author={Naber, Aaron},
        author={Valtorta, Daniele},
    title={The singular structure and regularity of stationary varifolds},
    journal={J.
Euro. Math. Soc.},
        volume={22},
    date={2020},
    pages={3305--3382},
}

{\ }


{\ }

\end{biblist}
\end{bibdiv}

\end{document}